\newtheorem{Def}{Definition}
\newtheorem{Prop}{Proposition}
\newtheorem{Thm}{Theorem}
\newtheorem{Cor}{Corollary}
\newenvironment{keywords}
{ \textbf{Keywords:} \begin{itshape} }
{\end{itshape} }
\begin{document}


\begin{frontmatter}

\title{Stable adiabatic times for a continuous evolution of Markov chains}

\begin{aug}

\author{\fnms{Kyle}  \snm{Bradford}\ead[label=e1]{kyle.bradford@gmail.com}}

\affiliation{University of Nevada, Reno} 
\address{Department of Mathematics and Statistics \\ 
Davidson Mathematics and Science Building, Room 218 \\
1664 N. Virginia Street
Reno, NV 89557-0084 á 775-784-6773, USA}

\end{aug}

\begin{abstract}
\label{sec: abstract}
This paper continues the discussion on the stability of time-inhomogeneous Markov chains.  In particular, this paper defines a time-inhomogeneous, discrete-time Markov chain governed by a continuous evolution in the appropriate martrix space.  This matrix space, $\mathcal{P}_{n}^{ia}$,  is the space of all stochastic matrices that are irreducible and aperiodic.  For this new type of evolution there is a definition of a specific type of stability called the stable adiabatic time.  This measure is bounded by a function of the optimal mixing time over the evolution.  Namely, for a time-inhomogeneous, discrete-time Markov chain governed by a continuous evolution through a function $\mathbf{P}: [0,1] \rightarrow \mathcal{P}_{n}^{ia}$  and $0 < \epsilon < \frac{1}{2 \sqrt{n}}$  $$t_{sad}(\mathbf{P}, \epsilon) \leq \frac{3n^{3 \slash 2} L t_{mix}^{2}(\mathbf{P}_{\infty}, \epsilon)}{(1-2\sqrt{n} \epsilon) \epsilon}$$

\noindent where $L$  is a Lipschitz constant related to the function $\mathbf{P}$.
\end{abstract}

\end{frontmatter}
\thispagestyle{empty}

\begin{keywords}
time-inhomogeneous Markov chain, mixing time, stability, adiabatic time
\end{keywords}

\section{\sc {Introduction}}
\label{sec: introduction}
The stability of Markov chains is relevant to many applications in math and science.  For background literature on Markov chains one can reference \cite{isaacson1976markov, karlin1975first, levin2009markov}.  The first type of stability often encountered in a discussion of time-homogeneous, irreducible and aperiodic Markov chains is the mixing time \cite{aldous2002reversible, levin2009markov}.  $\| \cdot \|_{TV}$  is reserved as the total variation norm and $\| \cdot \|_{k}$  as the $\ell^{k}( \mathbb{R}^{n})$  norm.

\begin{Def}\label{def: one}
For $\epsilon > 0$  the \underline{mixing time}  of a time-homogeneous, irreducible and aperiodic Markov chain governed by a probability transition matrix $\mathbf{P}$, which has unique stationary distribution $\mathbf{\pi}$,  is defined as:
\begin{equation} \label{eq: one}
t_{mix}(\mathbf{P}, \epsilon) = \inf \{ T \in \mathbb{N} : \| \mathbf{\nu} \mathbf{P}^{T} - \mathbf{\pi} \|_{TV} \leq \epsilon \}
 \end{equation}

\noindent over all distributions $\mathbf{\nu}$. 
\end{Def}

There are many examples of applications of the mixing time \cite{ross2006simulation}.  Although time-homogeneous Markov chains have been thoroughly studied, the stability of time-inhomogeneous Markov chains is much less attainable.  People have attempted to discuss a related mixing time for time-inhomogeneous Markov chains \cite{saloff2006convergence, saloff2009merging, saloff2011merging}.  Although these attempts have been for general time-inhomogeneous Markov chains, some time-inhomogeneous Markov chains with many real world applications have been studied.  The types of time-inhomogeneous Markov chains discussed in this paper are best described as adiabatic.

This article continues the effort in \cite{bradford2011adiabatic, bradford2012stable, kovchegov2010note} to bound the stable adiabatic time of an evolving, time-inhomogeneous Markov chain by a function of the largest mixing time over the entire evolution.  Specifically this paper makes three important contributions:  1) finding an exact bound rather than an asymptotic bound, 2) finding a tighter, optimal bound of the stable adiabatic time and 3) expanding the types of evolutions to include all continuous transitions in the appropriate matrix space.  Some of the strongest applications of the adiabatic time and the stable adiabatic time come from quantum physics and quantum computation.  Namely, the quantum adiabatic theorem from physics \cite{fock2004selected, kato1950on} and quantum adiabatic computing \cite{krovi2010adiabatic}.  There is a strong presence of adiabatic processes in optimization algorithms in queueing systems \cite{duong2014network}, network design \cite{rajagopalan2009network}  and network performance \cite{zacharias2011adiabatic}.  There is also an application to the stability of an Ising model with Glauber dynamics \cite{bradford2011adiabatic}.  Many of these applications were discussed in detail in previous works.  For example, the quantum adiabatic theorem was discussed in detail in \cite{ambainis2006elementary, bradford2011adiabatic, bradford2012stable, kovchegov2010note}  and the quantum computation applications were discussed in \cite{bradford2012stable}.  

In \cite{bradford2012stable} the time-inhomogeneous Markov chain was specifically governed by a convex-combination evolution of two irreducible, aperiodic probability transition matrices.  In particular there were matrices $\mathbf{P}_{0}$  and $\mathbf{P}_{1}$  and  $\mathbf{P}_{t} = (1-t) \mathbf{P}_{0} + t \mathbf{P}_{1}$.  Given a large integer $T$  the probability transition matrix at time $k \leq T$  for the time-inhomogeneous Markov chain was $\mathbf{P}_{\frac{k}{T}}$.  Naturally if stochastic matrices $\mathbf{P}_{0}$  and $\mathbf{P}_{1}$  are both irreducible and aperiodic, then $\mathbf{P_{s}}$ is both irreducible and aperiodic for $s \in [0,1]$.  This allows for a definition of the mixing time for each $s \in [0,1]$.  Taking the supremum of all of these mixing times is one of the ways that one can discuss stability for the time-inhomogeneous Markov chains with probability transition matrices $\mathbf{P_{\frac{k}{T}}}$.  The following definition makes this formal.

\begin{Def} \label{def: two}
For $\epsilon > 0$  the \underline{largest mixing time} of a time-inhomogeneous, discrete-time Markov chain governed by \underline{a convex-combination evolution} between the irreducible and aperiodic $\mathbf{P_{0}}$  and $\mathbf{P_{1}}$ 
\begin{equation} \label{eq: two}
t_{mix}(\mathbf{P}_{0}, \mathbf{P}_{1}, \epsilon) =\sup_{s \in [0,1]} \{ t_{mix}(\mathbf{P}_{s},\epsilon) \}.
\end{equation}
\end{Def}

This paper has already mentioned the stable adiabatic time a few times without giving the formal definition.  Now there is enough background information to make this definition for convex-combination evolutions.  This was the main object of study in \cite{bradford2012stable} and will motivate the analogue that we will use in this paper.

\begin{Def} \label{def: three}
For $\epsilon > 0$ the \underline{stable adiabatic time} of a time-inhomogeneous, discrete-time Markov chain governed by \underline{a convex-combination evolution} between the irreducible and aperiodic $\mathbf{P_{0}}$  and $\mathbf{P_{1}}$, which has unique stationary distribution  $\mathbf{\pi_{\frac{k}{T}}}$  for the probability transition matrix $\mathbf{P_{\frac{k}{T}}}$, is defined as :
\begin{equation} \label{eq: three}
t_{sad}(\mathbf{P_{0}}, \mathbf{P_{1}}, \epsilon) = \inf \{ T \in \mathbb{N} : \| \mathbf{\pi_{0}} \mathbf{P_{\frac{1}{T}}} \cdots \mathbf{P_{\frac{k}{T}}} - \mathbf{\pi_{\frac{k}{T}}} \|_{TV} < \epsilon \text{ for } 1 \leq k \leq T \}.
\end{equation}
\end{Def}

The stable adiabatic time is another type of stability for these types of time-inhomogeneous Markov chains.  It is natural to ask how the two previous definitions compare.  This was discussed in \cite{bradford2012stable} for these specific convex-combination evolutions.  The following asymptotic result was discovered in \cite{bradford2012stable} relating the stable adiabatic time and the largest mixing time.

\begin{Thm} \label{thm: one}
Given a time-inhomogeneous, discrete-time Markov chain governed by a convex-combination evolution between the irreducible and aperiodic $\mathbf{P_{0}}$  and $\mathbf{P_{1}}$, for any $\epsilon > 0$,

\begin{equation} \label{eq: four}
t_{sad}(\mathbf{P_{0}}, \mathbf{P_{1}}, \epsilon) = O \left( \frac{t_{mix}^{4}(\mathbf{P}_{0}, \mathbf{P}_{1}, \epsilon \slash 2)}{\epsilon^{3}} \right).
\end{equation}
\end{Thm}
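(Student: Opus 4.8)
The plan is to traverse the evolution in consecutive blocks of length $b=t_{mix}(\mathbf{P}_0,\mathbf{P}_1,\epsilon/2)$ and to balance the contraction produced by each ``almost frozen'' block against two sources of drift: the change in the transition matrices across a block, and the change in their stationary distributions. Write $\mathbf{P}_s=(1-s)\mathbf{P}_0+s\mathbf{P}_1$, so that $\mathbf{P}_s-\mathbf{P}_{s'}=(s-s')(\mathbf{P}_1-\mathbf{P}_0)$; set $\mu_k=\pi_0\mathbf{P}_{1/T}\cdots\mathbf{P}_{k/T}$ and $f_k=\|\mu_k-\pi_{k/T}\|_{TV}$. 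It suffices to exhibit a $T$ of the stated order with $f_k<\epsilon$ for every $1\le k\le T$.

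Two elementary estimates carry the argument. \emph{Block freezing:} inside the $j$-th block, replacing $\mathbf{P}_{(jb+1)/T}\cdots\mathbf{P}_{((j+1)b)/T}$ by the single power $\mathbf{P}_{jb/T}^{\,b}$ costs at most $\tfrac{b^{2}}{T}\|\mathbf{P}_1-\mathbf{P}_0\|$ in total variation, by telescoping and the fact that stochastic matrices are $\|\cdot\|_{TV}$-contractions. \emph{Stationary perturbation:} one needs $\|\pi_s-\pi_{s'}\|_{TV}\le C\,|s-s'|\,\|\mathbf{P}_1-\mathbf{P}_0\|$ with $C$ explicit in the mixing data. I would obtain this by writing $\pi_s=\lim_m\nu\mathbf{P}_s^{m}$ and $\pi_{s'}=\lim_m\nu\mathbf{P}_{s'}^{m}$, inserting $\nu\mathbf{P}_s^{m}$ and $\nu\mathbf{P}_{s'}^{m}$ for a well-chosen $m$ that is a multiple of $b$, bounding the two tail terms via submultiplicativity of the worst-case coupling distance (a block of length $b$ contracts it by a factor at most $2\epsilon<1$, so $t_{mix}(\cdot,\epsilon/2)$ alone controls the decay) and the middle term by $m\,|s-s'|\,\|\mathbf{P}_1-\mathbf{P}_0\|$, then optimizing over $m$; this makes $C$ polynomial in $b$ and in $\epsilon^{-1}$.

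Next I would run the recursion. Decomposing $\mu_{jb}-\pi_{jb/T}$ into the frozen contribution plus a zero-mass signed error, propagating each piece through the block, and combining the block-freezing bound, the contraction $2\epsilon$ on the signed part, and the stationary-perturbation bound on $\pi_{jb/T}$ versus $\pi_{(j+1)b/T}$, gives $f_{(j+1)b}\le(\epsilon+\delta)f_{jb}+E$ with $\delta\to0$ and $E=O\!\big(\tfrac{b^{2}}{T}\|\mathbf{P}_1-\mathbf{P}_0\|\big)$ once the factor $C$ is absorbed. Since $f_0=0$ and the multiplier is $<1$, iterating gives $f_{jb}\le E/(1-\epsilon-\delta)$ for every $j$; the one-step estimate $f_{k+1}\le f_k+\tfrac{C}{T}\|\mathbf{P}_1-\mathbf{P}_0\|$ then extends this to all $k$, not just multiples of $b$. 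Choosing $T$ as small as the requirement $E/(1-\epsilon)<\epsilon/2$ permits, and tracking the $b^{2}$ from freezing and the $C$ from the perturbation step through that inequality, forces $T$ of order $t_{mix}^{4}(\mathbf{P}_0,\mathbf{P}_1,\epsilon/2)\,\epsilon^{-3}$ up to a constant absorbing $\|\mathbf{P}_1-\mathbf{P}_0\|$ and $n$, which is the assertion.

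The main obstacle is the stationary-perturbation step: one needs a bound on $\|\pi_s-\pi_{s'}\|_{TV}$ that is simultaneously linear in $|s-s'|$, so that it is beaten by enlarging $T$, and expressed purely through $t_{mix}(\cdot,\epsilon/2)$, since the family $\mathbf{P}_s$ carries no a priori spectral-gap bound and fundamental-matrix perturbation theory is not directly available. The crude way these error terms and the block iteration are combined is also exactly where the (deliberately non-optimal) exponents $4$ and $3$ enter, and sharpening precisely this accounting is what the tighter bound of the present paper is designed to achieve.
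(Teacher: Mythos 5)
The paper itself does not prove Theorem~\ref{thm: one}: the bound is simply quoted from \cite{bradford2012stable}, and the only argument written out in this paper is for the sharper Theorem~\ref{thm: two}. Measured against that argument, your route is genuinely different in two places. You iterate a contraction over consecutive frozen blocks and track $f_{jb}=\|\mu_{jb}-\pi_{jb/T}\|_{TV}$ through a recursion $f_{(j+1)b}\le(\epsilon+\delta)f_{jb}+E$; the paper instead works once from the end, setting $N=k-t_{mix}$, telescoping only the last $t_{mix}$ factors against $\mathbf{P}(k/T)^{t_{mix}}$, and using that $\nu_N$ is \emph{some} probability distribution and nothing more, so a single application of the mixing-time definition absorbs the entire pre-$N$ history with no geometric series. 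And for the continuity of the stationary distributions, you lean on submultiplicativity of the maximal coupling distance $\bar d$, whereas the paper's Proposition~\ref{pr: one} goes through the smallest nonzero singular value $\sigma$ of $\mathbb{I}-\mathbf{P}(s)$ and then converts $\sigma$ into mixing data via Proposition~\ref{pr: two}.

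Two points in your sketch deserve scrutiny. The stationary-perturbation estimate you describe yields, after inserting $\nu\mathbf{P}_s^{m}$ and $\nu\mathbf{P}_{s'}^{m}$,
$\|\pi_s-\pi_{s'}\|_{TV}\le \bar d_s(m)+\bar d_{s'}(m)+\tfrac m2|s-s'|\,\|\mathbf{P}_1-\mathbf{P}_0\|$,
and with $b=t_{mix}(\cdot,\epsilon/2)$ the block contraction factor is $\epsilon$, not $2\epsilon$. Balancing the exponential tail against the linear middle term forces $m\sim b\log(1/|s-s'|)/\log(1/\epsilon)$, so the ``constant'' $C$ is not uniform in $|s-s'|$ --- it carries a $\log T$ once you set $|s-s'|=b/T$. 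For the coarse target $O(t_{mix}^4/\epsilon^3)$ that logarithm is harmless and, indeed, your accounting appears to close with exponents smaller than $4$ and $3$ (a benign overshoot for an upper bound, but it does mean the final sentence asserts rather than witnesses the stated powers). However, this is exactly the difference that matters for the paper: the singular-value route of Propositions~\ref{pr: one}--\ref{pr: two} delivers a genuinely linear modulus of continuity for $\pi$ with no logarithmic correction, and that, together with the one-shot end-of-chain decomposition, is what lets the paper sharpen the estimate to $t_{mix}^2/\epsilon$.
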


The main goal of this paper is to expand the types of evolutions that can take place.  To elaborate, first let $\mathcal{M}_{n}([0,1])$  be the collection of all $n \times n$ matrices with entries in $[0,1]$.  Define $\mathcal{P}_{n} = \{ \mathbf{P} \in \mathcal{M}_{n}([0,1]) : \mathbf{P} \mathbf{1} = \mathbf{1} \}$  where $\mathbf{1}$  is the n dimensional column vector with all entries $1$  and define $$\mathcal{P}_{n}^{ia} = \{ \mathbf{P} \in \mathcal{P}_{n} : \mathcal{P} \text{ is irreducible and aperiodic} \}.$$

To describe continuity in this matrix space the standard matrix norm will be used.  Specifically for a matrix $\mathbf{M}$  the matrix norm is defined as $\| \mathbf{M} \| =\max_{\nu} \| \nu M \|_{1}$  where the maximum is taken over all probability distributions $\nu$.  This paper considers continuous functions $\mathbf{P}: [0,1] \rightarrow \mathcal{P}_{n}^{ia}$  with respect to the Matrix norm to build the more general types of evolutions.  One can now allow the time-inhomogeneous Markov chains to be governed by a continuous evolution defined through the function $\mathbf{P}$.  Given a large integer $T$  the probability transition matrix at time $k \leq T$  for the time-inhomogeneous Markov chain was $\mathbf{P} \left(\frac{k}{T} \right)$.  Because all probability transition matrices are in $\mathcal{P}_{n}^{ia}$  a mixing time exists for all $s \in [0,1]$.  The supremum can be taken again to make a metric for stability for time-inhomogeneous Markov chains governed by these continuous evolutions.  Note the difference between this definition and Definition \ref{def: two}.

\begin{Def} \label{def: four}
For $\epsilon > 0$  the \underline{largest mixing time} of a time-inhomogeneous, discrete-time Markov chain governed by \underline{a continuous evolution in $\mathcal{P}_{n}^{ia}$}, written as $t_{mix}(\mathbf{P}_{\infty}, \epsilon)$, is defined as follows:
\begin{equation} \label{eq: five}
t_{mix}(\mathbf{P}_{\infty}, \epsilon) =\sup_{s \in [0,1]} \{ t_{mix}(\mathbf{P}(s),\epsilon) \}.
\end{equation}
\end{Def}

Finally the version of the stable adiabatic time used in this paper can be introduced.  The key difference we Definition \ref{def: three}  is the type of evolution.  This version of the stable adiabatic time allows for a more general, continuous evolution in $\mathcal{P}_{n}^{ia}$.

\begin{Def} \label{def: five}
For $\epsilon > 0$ the \underline{stable adiabatic time}  of a time-inhomogeneous, discrete-time Markov chain governed by \underline{a continuous evolution in $\mathcal{P}_{n}^{ia}$}, written as $t_{sad}(\mathbf{P}, \epsilon)$, is defined as follows:
\begin{equation} \label{eq: six}
\begin{split}
t_{sad}(\mathbf{P}, \epsilon) = \inf \Bigg\{ T \in \mathbb{N} : \Bigg\| \mathbf{\pi} \left( 0 \right) \mathbf{P} \left( \frac{1}{T} \right) \cdots \mathbf{P}\left( \frac{k}{T} \right) - \mathbf{\pi} &\left( \frac{k}{T} \right) \Bigg\|_{TV} < \epsilon \\
&\text{ for } 1 \leq k \leq T \}.
\end{split}
\end{equation}
\end{Def}

With all of these definitions formally laid out it can be said that the purpose of this paper is to find a relationship between $t_{sad}(\mathbf{P}, \epsilon)$  and $t_{mix}(\mathbf{P}_{\infty}, \epsilon)$  in an analogous way as Theorem \ref{thm: one}.  The machinery in this paper allows for a better result and derives an optimal result.  The rest of the paper is organized as follows: Section \ref{sec: supporting} introduces the necessary background information to allow for a succinct proof of the main result, Section \ref{sec: argument} gives the main result of the paper and gives a detailed proof of the main result, Section \ref{sec: conclusion} gives a context of the importance of the result and additional proofs and argumentation is outlined in Section \ref{sec: proofs}.

\section{\sc {Supporting Material}
\label{sec: supporting}}
Whenever one wants to make a proof about continuous function spaces, a common proof technique involves using a dense subset known as the Lipschitz continuous function with finite Lipschitz constant.  This section introduces two important propositions that aide the proof of the main result.  For both the first proposition and the main result in Section \ref{sec: argument} using Lipschitz continuous functions allows for a keen insight as to what commands these time-inhomogeneous Markov chains governed by a continuous evolution.  In this matrix space the following definition of a Lipschitz continuous function is used.

\begin{Def} \label{def: six}
A function $\mathbf{P^{*}}: [0,1] \rightarrow \mathcal{M}_{n}([0,1])$  is \underline{Lipschitz}  if there exists a positive constant $L$, called the \underline{Lipschitz constant}, so that 
\begin{equation} \label{eq: seven}
\| \mathbf{P^{*}} \left( x \right) - \mathbf{P^{*}} \left( y \right) \| \leq L \big| x - y \big|
\end{equation}
for $x,y \in [0,1]$.
\end{Def}

The function $\mathbf{P}: [0,1] \rightarrow \mathcal{P}_{n}^{ia}$  creates a function $\pi: [0,1] \rightarrow \mathbb{R}^{n}$.  By definition $\mathbf{P}$  is continuous with respect to the matrix norm, so a natural question is whether $\pi$  is a continuous function with respect to the total variation norm.  The following proposition declares that it is.  This in and of itself is not that surprising, but the nature of how it is continuous gives information that will be necessary in proving the main result.

\begin{Prop} \label{pr: one}
Let $\sigma = \inf_{s \in [0,1]} \{ \sigma(s) \}$  where $\sigma(s)$  is the smallest nonzero singular value of $\mathbb{I} - \mathbf{P}(s)$. \\

\noindent If $\mathbf{P}:[0,1] \rightarrow \mathcal{P}_{n}^{ia}$  is a continuous function with respect to the matrix norm,  then $\mathbf{\pi}:[0,1] \rightarrow \mathbb{R}^{n}$  is uniformly continuous with respect to the total variation norm.  In particular, for $\epsilon > 0$  there exists a positive constant $L$  such that for $s \in [0,1]$  and
\begin{equation} \label{eq: eight}
\delta = \frac{ \epsilon \sigma}{3 L n^{3 \slash 2}},
\end{equation}

\noindent $t \in \{ [0,1]: \big| t - s \big| \leq \delta \}$  implies that $\| \mathbf{\pi}(t) - \mathbf{\pi}(s) \|_{TV} \leq \epsilon$.
\end{Prop}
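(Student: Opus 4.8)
The plan is to show that the map $\mathbf{P}(s)\mapsto\pi(s)$ is controlled quantitatively by a resolvent-type bound, and then to compose this with the (assumed, or approximating) Lipschitz continuity of $\mathbf{P}$. The starting point is the characterization of the stationary distribution as the solution of the linear system $\pi(s)(\mathbb{I}-\mathbf{P}(s)) = \mathbf{0}$ together with the normalization $\pi(s)\mathbf{1} = 1$. First I would fix $s,t\in[0,1]$ and write $\mathbf{A}_s = \mathbb{I}-\mathbf{P}(s)$, $\mathbf{A}_t = \mathbb{I}-\mathbf{P}(t)$. From $\pi(s)\mathbf{A}_s = \mathbf{0}$ and $\pi(t)\mathbf{A}_t = \mathbf{0}$ one gets $(\pi(t)-\pi(s))\mathbf{A}_s = \pi(t)(\mathbf{A}_s - \mathbf{A}_t) = \pi(t)(\mathbf{P}(t)-\mathbf{P}(s))$. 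The right-hand side has Euclidean norm at most $\|\mathbf{P}(t)-\mathbf{P}(s)\|$ (since $\pi(t)$ is a probability vector, $\|\pi(t)\|_1 = 1$, and the matrix norm in use dominates the action on such vectors); invoking a Lipschitz bound $\|\mathbf{P}(t)-\mathbf{P}(s)\|\le L|t-s|$ makes this at most $L|t-s|$.

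Next I would invert $\mathbf{A}_s$ on the appropriate subspace. Since $\mathbf{P}(s)$ is irreducible and aperiodic, $\mathbb{I}-\mathbf{P}(s)$ has rank $n-1$, with kernel spanned by $\mathbf{1}$ (acting on the right) and by $\pi(s)$ (acting on the left); the vector $\pi(t)-\pi(s)$ lies in the orthogonal complement of $\mathbf{1}$ because both are probability vectors, i.e. $(\pi(t)-\pi(s))\mathbf{1}=0$. Restricted to that complement, $\mathbf{A}_s$ is invertible and its smallest singular value is exactly $\sigma(s)\ge\sigma$, so $\|\pi(t)-\pi(s)\|_2 \le \sigma(s)^{-1}\|(\pi(t)-\pi(s))\mathbf{A}_s\|_2 \le \sigma^{-1} L|t-s|$. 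Converting from the $\ell^2$ norm to total variation via $\|\mathbf{x}\|_{TV} = \tfrac12\|\mathbf{x}\|_1 \le \tfrac12\sqrt{n}\,\|\mathbf{x}\|_2$ gives $\|\pi(t)-\pi(s)\|_{TV} \le \tfrac{\sqrt{n}}{2}\cdot\sigma^{-1}L|t-s|$. Plugging in $|t-s|\le\delta = \epsilon\sigma/(3Ln^{3/2})$ yields a bound of order $\epsilon/n$, comfortably below $\epsilon$; the constant $3$ and the extra powers of $n$ leave room for the norm conversions and for whatever slack the density/approximation argument (replacing a merely continuous $\mathbf{P}$ by a Lipschitz one) costs, which is presumably why the stated $\delta$ is not tight.

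The one genuine obstacle is justifying that $\sigma = \inf_{s}\sigma(s) > 0$, i.e. that the infimum over $[0,1]$ of the smallest nonzero singular values is strictly positive rather than zero — otherwise $\delta$ is ill-defined. This follows from compactness: $s\mapsto \mathbb{I}-\mathbf{P}(s)$ is continuous, each such matrix has rank exactly $n-1$ (by irreducibility and aperiodicity of $\mathbf{P}(s)$), and $s\mapsto\sigma(s)$, being the $(n-1)$-th singular value, is continuous on the compact set $[0,1]$ where it is everywhere positive, hence attains a positive minimum. I would state this as the first step. A secondary point to handle carefully is that the proposition as phrased does not itself hypothesize Lipschitz continuity of $\mathbf{P}$, yet the conclusion produces a Lipschitz constant $L$; the resolution is that uniform continuity of $\mathbf{P}$ on $[0,1]$ (from continuity plus compactness) already gives, for the given $\epsilon$, a modulus of continuity that can be packaged as an effective $L$ on the relevant scale, or alternatively one first approximates $\mathbf{P}$ uniformly by a Lipschitz function in $\mathcal{P}_n^{ia}$ and tracks the perturbation of $\pi$ through the same resolvent estimate. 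Either route reduces the whole statement to the two-line linear-algebra identity above plus the compactness fact about $\sigma$.
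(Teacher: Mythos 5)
Your argument has the right skeleton — the identity $(\pi(t)-\pi(s))(\mathbb{I}-\mathbf{P}(s)) = \pi(t)(\mathbf{P}(t)-\mathbf{P}(s))$, a singular-value bound, an $\ell^2$-to-TV conversion, and a density argument to replace the merely continuous $\mathbf{P}$ by a Lipschitz $\mathbf{P}^*$ — and this is also the skeleton of the paper's proof. Your observation that one must verify $\sigma>0$ by compactness is a legitimate point the paper leaves implicit.

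However, there is a genuine gap in the central inversion step. You argue that since $\pi(t)-\pi(s)\in\mathbf{1}^{\perp}$, and $\mathbf{A}_s=\mathbb{I}-\mathbf{P}(s)$ is invertible on $\mathbf{1}^{\perp}$ with smallest singular value $\sigma(s)$, one gets $\|\pi(t)-\pi(s)\|_2\le\sigma(s)^{-1}\|(\pi(t)-\pi(s))\mathbf{A}_s\|_2$. This is false as stated. The smallest nonzero singular value $\sigma(s)$ controls the left action of $\mathbf{A}_s$ on the orthogonal complement of its \emph{left} kernel, which is $\pi(s)^{\perp}$, not $\mathbf{1}^{\perp}$. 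The subspace $\mathbf{1}^{\perp}$ is the orthogonal complement of the \emph{right} kernel, i.e. the row space of $\mathbf{A}_s$; it is not where the singular-value bound for row-vector multiplication applies. The vector $\pi(t)-\pi(s)$ does lie in $\mathbf{1}^{\perp}$, but it generally has a nonzero component along $\pi(s)$, so the naive singular-value inversion does not go through. This is precisely the subtlety the paper's proof works to handle: it expands $\pi(t)-\pi(s)$ in the singular-vector basis of $\mathbf{A}_s\mathbf{A}_s^{T}$, isolates the problematic overlap $\langle \mathbf{w}(s,t),\mathbf{v}_n(s)\rangle$ with the $\pi(s)$-direction, and bounds $1-\langle \mathbf{w},\mathbf{v}_n\rangle^2 = \tfrac14\|\mathbf{w}-\mathbf{v}_n\|_2^2\|\mathbf{w}+\mathbf{v}_n\|_2^2$ from below by a geometric argument comparing the projections of $\mathbf{w}(s,t)$ and $\mathbf{v}_n(s)$ onto $\mathbf{1}$. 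That argument is what costs the additional factor of order $n\|\pi(s)\|_2^2$ and produces the $n^{3/2}$ in the final $\delta$. Your proposed bound $\|\pi(t)-\pi(s)\|_2\le\sigma^{-1}L|t-s|$ is off by exactly that factor, and the missing factor is not mere slack but the content of the hard part of the proposition.
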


Notice that in the above proposition the continuity depends on the smallest nonzero singular value of the function $\mathbf{P}$  throughout the entire evolution.  This value $\sigma$  has information relating to the largest mixing time of $\mathbf{P}$  throughout the entire evolution.  The following proposition makes this point.

\begin{Prop} \label{pr: two}
Let $\mathbf{P}:[0,1] \rightarrow \mathcal{P}_{n}^{ia}$  be a continuous function with respect to the matrix norm.  Let $\sigma = \inf_{s \in [0,1]} \{ \sigma(s) \}$  where $\sigma(s)$  is the smallest nonzero singular value of $\mathbb{I} - \mathbf{P}(s)$. \\

Given $\epsilon > 0$, 
\begin{equation} \label{eq: nine}
\frac{1-2\sqrt{n}\epsilon}{\sigma} \leq t_{mix}(\mathbf{P}_{\infty}, \epsilon).
\end{equation}
\end{Prop}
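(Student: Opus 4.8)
The plan is to exhibit, for a cleverly chosen $s \in [0,1]$, an initial distribution $\nu$ whose distance from $\pi(s)$ after any number $T$ of steps under $\mathbf{P}(s)$ stays large unless $T$ is large; the lower bound on this threshold $T$ will then be a lower bound on $t_{mix}(\mathbf{P}(s),\epsilon)$, hence on $t_{mix}(\mathbf{P}_\infty,\epsilon) = \sup_{s}t_{mix}(\mathbf{P}(s),\epsilon)$. Concretely, pick $s$ (or take a limiting sequence of $s$'s) so that $\sigma(s)$ is close to the infimum $\sigma$, and let $w$ be a left singular vector of $\mathbb{I} - \mathbf{P}(s)$ associated with the smallest nonzero singular value $\sigma(s)$, so that $\|w(\mathbb{I}-\mathbf{P}(s))\|_2 = \sigma(s)\|w\|_2$. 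The idea is that such a $w$ is a direction that $\mathbf{P}(s)$ contracts only very slowly.

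The key steps, in order, are as follows. First I would translate $w$ into a genuine perturbation of the stationary distribution: write $\nu = \pi(s) + c\,w'$ where $w'$ is $w$ projected onto the hyperplane $\{x : x\mathbf{1}=0\}$ (a signed measure of total mass zero) and $c>0$ is chosen as large as possible so that $\nu$ remains a probability distribution; one controls $\|w'\|_2$ versus $\|w\|_2$ and relates $\|\cdot\|_2$ to $\|\cdot\|_{TV}$ and to $\|\cdot\|_1$ using the $\sqrt{n}$-type equivalences on $\mathbb{R}^n$, which is where the $\sqrt n$ factors in \eqref{eq: nine} originate. Second, since $\pi(s)\mathbf{P}(s) = \pi(s)$, we have $\nu\mathbf{P}(s)^T - \pi(s) = c\,w'(\mathbf{P}(s))^T$, and I would bound how fast $w'(\mathbf{P}(s))^T$ shrinks: in one step, $\|w'\mathbf{P}(s) - w'\|$-type reasoning gives $\|w'\mathbf{P}(s)\| \geq \|w'\| - \|w'(\mathbb{I}-\mathbf{P}(s))\| \geq (1-\sigma)\|w'\|$ after the singular-value estimate, and iterating, $\|w'(\mathbf{P}(s))^T\| \geq (1-\sigma)^T\|w'\|$. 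Third, I would impose that at the mixing time $T = t_{mix}(\mathbf{P}(s),\epsilon)$ we need $\|\nu(\mathbf{P}(s))^T - \pi(s)\|_{TV} \leq \epsilon$, which combined with the lower bound forces $(1-\sigma)^T \cdot (\text{initial distance}) \leq \epsilon$; using $(1-\sigma)^T \leq e^{-\sigma T}$ or the cruder $(1-\sigma)^T \geq 1 - \sigma T$ and the fact that the initial distance can be taken $\geq$ a constant like $1$ (after the normalization in step one accounts for the $\sqrt n$), one solves for $T$ and obtains $T \geq \frac{1-2\sqrt n\epsilon}{\sigma}$. Finally, taking the supremum over $s$ (or passing to the limit along the sequence realizing $\sigma$) and using that $t_{mix}(\mathbf{P}_\infty,\epsilon)$ dominates each $t_{mix}(\mathbf{P}(s),\epsilon)$ yields \eqref{eq: nine}.

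The main obstacle I anticipate is step one together with the bookkeeping of norm conversions: one must guarantee that the perturbation direction $w'$ (the mass-zero projection of a singular vector) can be scaled to give a legitimate probability distribution whose $TV$-distance from $\pi(s)$ is bounded below by a quantity that, after the $\ell^2 \leftrightarrow \ell^1 \leftrightarrow TV$ and $\sqrt n$ conversions, is large enough to produce exactly the constant $1 - 2\sqrt n\epsilon$ in the numerator rather than something weaker — this is precisely the "optimal"/tight part of the estimate. A secondary subtlety is that the infimum defining $\sigma$ may not be attained, so the argument should be phrased with an $s$ for which $\sigma(s) < \sigma + \eta$ for arbitrary $\eta>0$ and then let $\eta \to 0$; since $t_{mix}(\mathbf{P}_\infty,\epsilon)$ is a fixed number bounding every $t_{mix}(\mathbf{P}(s),\epsilon)$, this limiting step is harmless. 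The contraction estimate $\|w'(\mathbf{P}(s))^T\| \geq (1-\sigma)^T\|w'\|$ itself is the easy part once the singular-value characterization of $\sigma(s)$ is in hand.
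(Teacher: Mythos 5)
Your high-level strategy — exhibit a slowly contracting direction coming from the singular vector associated with the smallest nonzero singular value of $\mathbb{I}-\mathbf{P}(s)$, and use it to lower-bound $t_{mix}(\mathbf{P}(s),\epsilon)$ before taking a supremum over $s$ — is the right one, and it is in the spirit of the argument the paper delegates to \cite{bradford2012stable}. But two of your intermediate steps would not survive being written out.

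The main gap is the iterated contraction estimate $\|w'(\mathbf{P}(s))^{T}\|\geq(1-\sigma)^{T}\|w'\|$. The one-step bound you quote, $\|w'\mathbf{P}(s)\|\geq\|w'\|-\|w'(\mathbb{I}-\mathbf{P}(s))\|\geq(1-\sigma)\|w'\|$, requires $\|w'(\mathbb{I}-\mathbf{P}(s))\|\leq\sigma\|w'\|$, which is an \emph{upper} bound and holds because $w'$ is (proportional to) the singular vector realizing the \emph{smallest} nonzero singular value. After one application of $\mathbf{P}(s)$ this property is lost: $w'\mathbf{P}(s)=w'-w'(\mathbb{I}-\mathbf{P}(s))$ is generically a mixture of all singular directions, and for such a vector $\sigma$ gives only a \emph{lower} bound on $\|\cdot(\mathbb{I}-\mathbf{P}(s))\|$. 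So the inequality cannot be iterated, and $(1-\sigma)^{T}$ is unjustified. The correct replacement — which gives exactly the linear-in-$T$ decay your Bernoulli approximation $(1-\sigma)^{T}\geq 1-\sigma T$ was aiming at — is the telescoping identity $\mathbb{I}-\mathbf{P}(s)^{T}=\sum_{j=0}^{T-1}(\mathbb{I}-\mathbf{P}(s))\mathbf{P}(s)^{j}$. Since right-multiplication by a stochastic matrix is an $\ell^{1}$-contraction, this yields $\|w'(\mathbb{I}-\mathbf{P}(s)^{T})\|_{1}\leq T\|w'(\mathbb{I}-\mathbf{P}(s))\|_{1}$, and then the reverse triangle inequality gives $\|w'\mathbf{P}(s)^{T}\|\geq\|w'\|-T\|w'(\mathbb{I}-\mathbf{P}(s))\|$ without any need for the vector to stay a singular vector.

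A secondary but real issue is your construction of the starting point. Projecting $w$ orthogonally onto $\{x:x\mathbf{1}=0\}$ (i.e.\ subtracting $(w\cdot\mathbf{1}/n)\mathbf{1}$) destroys the singular-value control, because $\mathbf{1}(\mathbb{I}-\mathbf{P}(s))\neq 0$ unless $\mathbf{P}(s)$ is doubly stochastic; the mean-zero hyperplane and $\{x:x\perp\pi(s)\}$ are different hyperplanes. The projection that preserves the relation is the oblique one along $\pi(s)$, namely $w'=w-(w\cdot\mathbf{1})\,\pi(s)$, which gives $w'\cdot\mathbf{1}=0$ and, crucially, $w'(\mathbb{I}-\mathbf{P}(s))=w(\mathbb{I}-\mathbf{P}(s))$, with $\|w'\|_{2}\geq\|w\|_{2}$. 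Finally, taking $\nu=\pi(s)+cw'$ with $c$ "as large as possible'' ties $c$ to the smallest entry of $\pi(s)$, so the "initial distance'' is not bounded below uniformly in $\mathbf{P}(s)$ and you cannot recover a clean constant. It is cleaner to decompose the mean-zero vector $w'$ directly as $w'=\alpha(\mu_{1}-\mu_{2})$ with $\mu_{1},\mu_{2}$ probability distributions and $\alpha=\|w'\|_{1}/2$, and apply the mixing-time bound to both $\mu_{1}$ and $\mu_{2}$, which gives $\|w'\mathbf{P}(s)^{T}\|_{1}\leq 2\epsilon\|w'\|_{1}$ at $T=t_{mix}(\mathbf{P}(s),\epsilon)$; combined with the telescoping bound and $\|w'\|_{1}\geq\|w'\|_{2}\geq 1$ this yields a lower bound on $T$ of the advertised order once the $\ell^{1}$--$\ell^{2}$ conversions introducing $\sqrt{n}$ are tracked. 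The limiting argument over $s$ to handle the non-attained infimum is fine as you state it.
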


Instead of including a proof of Proposition \ref{pr: two} note that the proof falls rather directly from a similar argument in \cite{bradford2012stable}.  In this paper one can find a similar relationship between the smallest nonzero singular value of a matrix and its mixing time.  Here the only thing to note is that the mixing time of time-homogeneous Markov chain associated with the smallest nonzero singular value is smaller that the supremum of all mixing times throughout the entire evolution.

This provides all the necessary background to approach our main result.  This result is now addressed in Section \ref{sec: argument}.

\section{\sc {Main Result}
\label{sec: argument}}
The main result of this paper is given in the following theorem and proven in this section.  It will provide the necessary analogue for the bound on the stable adiabatic time for time-inhomogeneous Markov chains governed by a continuous evolution by a function of the largest mixing time over the entire evolution.  Note that this result differs from Theorem \ref{thm: one} by not being an asymptotic result and having a lower power of the largest mixing time bound the stable adiabatic time.  After this theorem is proven the impact of the result will be discussed in Section \ref{sec: conclusion}.  

\begin{Thm} \label{thm: two}
Given a time-inhomogeneous, discrete-time Markov chain governed by a continuous evolution in $\mathcal{P}_{n}^{ia}$, for $0 < \epsilon < \frac{1}{2 \sqrt{n}}$  and $\mathbf{P}:[0,1] \rightarrow \mathcal{P}_{n}^{ia}$  a continuous function with respect to the matrix norm we have that 
\begin{equation} \label{eq: ten}
t_{sad}(\mathbf{P}, \epsilon) \leq \frac{3n^{3 \slash 2}Lt_{mix}^2(\mathbf{P}_{\infty},\epsilon)}{(1 - 2\sqrt{n} \epsilon)\epsilon}
\end{equation}
\end{Thm}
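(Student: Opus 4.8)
The plan is to leverage the two propositions, reducing the bound on $t_{sad}$ to controlling two distinct sources of error across the discretized evolution: (i) the mixing behavior of the frozen chain at each step and (ii) the drift of the target distribution $\pi(k/T)$ between consecutive steps. Fix $T \in \mathbb{N}$ large and write $\nu_k = \pi(0)\mathbf{P}(1/T)\cdots\mathbf{P}(k/T)$. The goal is to find an explicit $T$ for which $\|\nu_k - \pi(k/T)\|_{TV} < \epsilon$ for all $1 \le k \le T$. The natural tool is a triangle-inequality telescoping: after applying $\mathbf{P}(k/T)$ to $\nu_{k-1}$, compare to $\pi((k-1)/T)\mathbf{P}(k/T)$ — which contracts toward $\pi(k/T)$ at the mixing rate of the frozen chain $\mathbf{P}(k/T)$ — and then absorb the additional error $\|\pi((k-1)/T) - \pi(k/T)\|_{TV}$ coming from the motion of the stationary vector. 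Since total variation is non-increasing under a fixed stochastic matrix, $\|\nu_{k-1}\mathbf{P}(k/T) - \pi((k-1)/T)\mathbf{P}(k/T)\|_{TV} \le \|\nu_{k-1} - \pi((k-1)/T)\|_{TV}$, but that alone gives no contraction; one needs the sharper statement that after roughly $t_{mix}(\mathbf{P}_\infty,\epsilon)$ applications the distance to stationarity drops below $\epsilon$, i.e.\ one should group the steps into blocks of length $t_{mix}(\mathbf{P}_\infty,\epsilon)$ (using submultiplicativity of the contraction, or an $\epsilon$-geometric decay of TV distance under iterated application of a fixed chain) and track how the accumulated drift interacts with this block-wise contraction.

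**Next I would** choose $T$ so that the per-step drift is negligible against the contraction budget. By Proposition~\ref{pr: one}, taking $\delta = \epsilon\sigma/(3Ln^{3/2})$ guarantees $\|\pi(t)-\pi(s)\|_{TV} \le \epsilon$ whenever $|t-s|\le\delta$; rescaled, if $1/T \le \delta$, i.e.\ $T \ge 3Ln^{3/2}/(\epsilon\sigma)$, then consecutive targets are within $\epsilon$ in TV — but for the argument I want the per-step drift to be a small fraction, so I would instead impose $1/T \le \delta'$ for a suitably scaled-down $\delta'$ (replacing $\epsilon$ by something like $\epsilon/t_{mix}$ or a constant multiple thereof inside Proposition~\ref{pr: one}), which forces $T$ of order $n^{3/2}L\,t_{mix}/(\sigma\epsilon)$. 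Then within each block of $\sim t_{mix}(\mathbf{P}_\infty,\epsilon)$ steps the chain contracts a factor below $\epsilon$ while the cumulative drift over that block is only $\sim t_{mix} \cdot (\text{per-step drift})$, which the block-length choice keeps below a constant times $\epsilon$. The total number of steps needed is then $T \sim t_{mix} \times (\text{blocks})$, and substituting $1/\sigma \le t_{mix}(\mathbf{P}_\infty,\epsilon)/(1-2\sqrt{n}\epsilon)$ from Proposition~\ref{pr: two} converts the $\sigma$ into the second factor of $t_{mix}$, yielding exactly the claimed $3n^{3/2}L\,t_{mix}^2(\mathbf{P}_\infty,\epsilon)/((1-2\sqrt{n}\epsilon)\epsilon)$.

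**The main obstacle** I anticipate is making the block-wise contraction argument airtight when the transition matrix changes at every step: the standard fact "$\|\mu\mathbf{Q}^{t_{mix}} - \pi\|_{TV} \le \epsilon\|\mu-\pi\|_{TV}$" applies to a fixed $\mathbf{Q}$, whereas here each of the $t_{mix}$ factors in a block is a slightly different matrix $\mathbf{P}(j/T)$, and their common stationary-ish behavior must be quantified. The resolution is to compare the product $\mathbf{P}((k_0+1)/T)\cdots\mathbf{P}((k_0+m)/T)$ against the power $\mathbf{P}(k_0/T)^m$ of a single frozen matrix, bounding the difference by $\sum$ of the pairwise matrix-norm gaps $\|\mathbf{P}(j/T)-\mathbf{P}(k_0/T)\| \le L\,m/T$, so that for $m = t_{mix}$ and $T$ large the product is within $o(\epsilon)$ of the genuine power — and the genuine power does contract. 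Balancing "the block must be long enough to contract" ($m \ge t_{mix}$) against "the block must be short enough that the matrices haven't moved much" ($Lm^2/T$ small) is what pins down $T$; getting all the constants to collapse to the stated $3$ is the delicate bookkeeping, and I would keep the intermediate estimate $\|\pi(t)-\pi(s)\|_{TV}\le 3Ln^{3/2}|t-s|/\sigma$ (the form underlying Proposition~\ref{pr: one}) in play throughout so the final substitution is transparent.
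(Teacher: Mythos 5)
Your proposal captures exactly the two mechanisms in the paper's proof: telescoping the drift of the stationary vector via Proposition~\ref{pr: one} scaled down by $t_{mix}$ (this handles $k < t_{mix}(\mathbf{P}_\infty,\epsilon/2)$), and comparing the final $t_{mix}$-step segment of the product to the corresponding power of the frozen matrix $\mathbf{P}(k/T)$ with the error controlled by Lipschitz gaps $\|\mathbf{P}(j/T)-\mathbf{P}(k/T)\|$ (this handles $k \geq t_{mix}$), then substituting Proposition~\ref{pr: two} to convert $1/\sigma$ into a second factor of $t_{mix}/(1-2\sqrt{n}\epsilon)$. One simplification worth noting about your envisioned block-wise bookkeeping: because $\bigl\| \nu\, \mathbf{P}(k/T)^{t_{mix}} - \pi(k/T) \bigr\|_{TV} \leq \epsilon/2$ holds uniformly over all probability vectors $\nu$, there is no accumulated error to propagate from one block to the next — whatever the distribution $\nu_{k-t_{mix}}$ is, it is absorbed by this uniform contraction — so the paper's Part~1 argument is a single look-back of $t_{mix}$ steps rather than an iteration over blocks, which is also why the intermediate estimate carries a single additive $+\,2\epsilon/3$ term (from Lipschitz density) rather than the multiplicative form $3Ln^{3/2}|t-s|/\sigma$ you wrote.
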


\begin{proof}
\ 

\noindent For the proof first let $0 < \epsilon < \frac{1}{2\sqrt{n}}$  and let $\mathbf{P}:[0,1] \rightarrow \mathcal{P}_{n}^{ia}$  be a continuous function with respect to the matrix norm.  It is important to remember that $t_{mix}(\mathbf{P}_{\infty}, \epsilon \slash 2)$  exists and is a natural number. \\

\noindent Recall that the space of Lipschitz continuous functions from $[0,1]$  to $\mathcal{P}_{n}^{ia}$  with finite Lipschitz constant is dense in the space of continuous functions from $[0,1]$  to $\mathcal{P}_{n}^{ia}$.  This implies that one can find a Lipschitz continuous function $\mathbf{P^{*}}:[0,1] \rightarrow \mathcal{P}_{n}^{ia}$  with Lipschitz constant $L$  such that $$ \| \mathbf{P}(t) - \mathbf{P^{*}}(t) \| \leq \frac{ \epsilon}{4 t_{mix} \left(\mathbf{P}_{\infty}, \epsilon \slash 2 \right)} $$

\noindent for all $t \in [0,1]$. \\

\noindent The goal of this proof is to select a value of $T$  large enough so that $$\Bigg\| \mathbf{\pi}(0) \mathbf{P} \left( \frac{1}{T} \right) \mathbf{P} \left( \frac{2}{T} \right) \cdots \mathbf{P} \left( \frac{k-1}{T} \right) \mathbf{P} \left( \frac{k}{T} \right) - \mathbf{\pi}\left( \frac{k}{T} \right) \Bigg\|_{TV} \leq \epsilon$$

\noindent for $1 \leq k \leq T$. \\

\noindent Let $$T = \frac{3 n^{3 \slash 2} L t_{mix}^{2} \left(\mathbf{P}_{\infty}, \epsilon \slash 2 \right)}{ (1 - 2 \sqrt{n} \epsilon)\epsilon}.$$
\ 

\noindent At this point the proof is decomposed into two parts.  \\

\noindent \underline{Part 1. Assume that $k \geq t_{mix}(\mathbf{P}_{\infty}, \epsilon \slash 2)$} \\

\noindent Let $N=k - t_{mix}(\mathbf{P}_{\infty}, \epsilon \slash 2)$. \\

\noindent Observe that 
\begin{align*} 
\mathbf{\pi}(0) \mathbf{P} \left( \frac{1}{T} \right) &\mathbf{P} \left( \frac{2}{T} \right) \cdots \mathbf{P} \left( \frac{k-1}{T} \right) \mathbf{P} \left( \frac{k}{T} \right) \\
&= \mathbf{\nu}_{N} \left( \mathbf{P} \left( \frac{k}{T} \right) + \left( \mathbf{P} \left( \frac{N+1}{T} \right) - \mathbf{P} \left( \frac{k}{T} \right) \right) \right) \mathbf{P}^{\circ}_{N+2} \\
&= \mathbf{\nu}_{N} \mathbf{P} \left( \frac{k}{T} \right) \mathbf{P}^{\circ}_{N+2} + \mathbf{\nu}_{N} \left( \mathbf{P} \left( \frac{N+1}{T} \right) - \mathbf{P} \left( \frac{k}{T} \right) \right) \mathbf{P}^{\circ}_{N+2}.
\end{align*}

\noindent where $\mathbf{\nu}_{N} = \mathbf{\pi}(0) \mathbf{P} \left( \frac{1}{T} \right) \mathbf{P} \left( \frac{2}{T} \right) \cdots \mathbf{P} \left( \frac{N}{T} \right)$, $\mathbf{P}^{\circ}_{\ell} =  \mathbf{P} \left( \frac{\ell}{T} \right) \cdots \mathbf{P}\left( \frac{k}{T} \right)$.  \\

\noindent By continuing this process for $\mathbf{P} \left( \frac{i}{T} \right)$  for $i \geq N+2$, it can be shown that 
\begin{align*} 
\mathbf{\pi}(0) \mathbf{P} &\left( \frac{1}{T} \right) \mathbf{P} \left( \frac{2}{T} \right) \cdots \mathbf{P} \left( \frac{k-1}{T} \right) \mathbf{P} \left( \frac{k}{T} \right) \\
&= \mathbf{\nu}_{N} \left( \mathbf{P} \left( \frac{k}{T} \right) \right)^{k-N} \\
&+ \sum_{\ell=0}^{k-N - 2} \mathbf{\nu}_{N} \left( \mathbf{P} \left( \frac{k}{T} \right) \right)^{\ell} \left( \mathbf{P} \left( \frac{N+1+\ell}{T} \right) - \mathbf{P} \left( \frac{k}{T} \right) \right) \mathbf{P}^{\circ}_{N+2+\ell}.
\end{align*}

\noindent By the triangle inequality, it can be shown that
\begin{align*} 
\Bigg\| \mathbf{\pi}(0) \mathbf{P} &\left( \frac{1}{T} \right) \mathbf{P} \left( \frac{2}{T} \right) \cdots \mathbf{P} \left( \frac{k-1}{T} \right) \mathbf{P} \left( \frac{k}{T} \right) - \mathbf{\pi} \left( \frac{k}{T} \right) \Bigg\|_{TV} \\
&\leq  \Bigg\| \mathbf{\nu}_{N} \left( \mathbf{P}\left( \frac{k}{T} \right) \right)^{k-N} - \mathbf{\pi}\left( \frac{k}{T} \right) \Bigg\|_{TV} \\
&+ \sum_{\ell=0}^{k-N-2} \Bigg\| \mathbf{\nu}_{N} \left( \mathbf{P} \left( \frac{k}{T} \right) \right)^{\ell} \left( \mathbf{P} \left( \frac{N+1+\ell}{T} \right) - \mathbf{P} \left( \frac{k}{T} \right) \right) \mathbf{P}^{\circ}_{N+2+\ell} \Bigg\|_{TV}.
\end{align*}

\noindent Because $2 \| \mu - \nu \|_{TV} = \| \mu - \nu \|_{1}$  whenever $\mu$  and $\nu$  is a probability distribution,  
\begin{align*} 
\Bigg\| \mathbf{\pi}(0) \mathbf{P} &\left( \frac{1}{T} \right) \mathbf{P} \left( \frac{2}{T} \right) \cdots \mathbf{P} \left( \frac{k-1}{T} \right) \mathbf{P} \left( \frac{k}{T} \right) - \mathbf{\pi}\left( \frac{k}{T} \right) \Bigg\|_{TV} \\
&\leq  \Bigg\| \mathbf{\nu}_{N} \left( \mathbf{P}\left( \frac{k}{T} \right) \right)^{k-N} - \mathbf{\pi} \left( \frac{k}{T} \right) \Bigg\|_{TV} + \frac{1}{2} \sum_{\ell=0}^{k-N-2} \| \mathbf{\nu}^{\circ}_{\ell} \mathbf{P}^{\circ}_{N+2+\ell} \|_{1} 
\end{align*}

\noindent where $\mathbf{\nu}^{\circ}_{\ell} = \mathbf{\nu}_{N} \left( \mathbf{P} \left( \frac{k}{T} \right) \right)^{\ell} \left( \mathbf{P} \left( \frac{N+1+\ell}{T} \right) - \mathbf{P} \left( \frac{k}{T} \right) \right).$ \\

\noindent Notice that for $0 \leq \ell \leq k-N-2$, $\mathbf{P}^{\circ}_{N+2+\ell}$  is a probability transition matrix.  This will imply that
\begin{align*}
\| \mathbf{\nu}^{\circ}_{\ell} \mathbf{P}^{\circ}_{N+2+\ell} \|_{1} &= \sum_{j=1}^{n} \big| \sum_{i=1}^{n} \mathbf{\nu}^{\circ}_{\ell} \left( i \right) \mathbf{P}^{\circ}_{N+2+\ell} \left( i,j \right) \big| \\
&\leq \sum_{j=1}^{n}  \sum_{i=1}^{n} \big| \mathbf{\nu}^{\circ}_{\ell} \left( i \right) \big| \mathbf{P}^{\circ}_{N+2+\ell} \left( i,j \right) \\
&= \sum_{i=1}^{n} \big| \mathbf{\nu}^{\circ}_{\ell} \left( i \right) \big| \sum_{j=1}^{n}  \mathbf{P}^{\circ}_{N+2+\ell} \left( i,j \right) \\
&= \sum_{i=1}^{n} \big| \mathbf{\nu}^{\circ}_{\ell} \left( i \right) \big| \\
&= \| \mathbf{\nu}^{\circ}_{\ell} \|_{1}.
\end{align*}

\noindent Therefore
\begin{align*} 
\Bigg\| \mathbf{\pi}(0) \mathbf{P} &\left( \frac{1}{T} \right) \mathbf{P} \left( \frac{2}{T} \right) \cdots \mathbf{P} \left( \frac{k-1}{T} \right) \mathbf{P} \left( \frac{k}{T} \right) - \mathbf{\pi}\left( \frac{k}{T} \right) \Bigg\|_{TV} \\
&\leq  \Bigg\| \mathbf{\nu}_{N} \left( \mathbf{P}\left( \frac{k}{T} \right) \right)^{k-N} - \mathbf{\pi} \left( \frac{k}{T} \right) \Bigg\|_{TV} \\
&+ \frac{1}{2} \sum_{\ell=0}^{k-N-2} \Bigg\| \mathbf{\nu}_{N} \left( \mathbf{P} \left( \frac{k}{T} \right) \right)^{\ell} \left( \mathbf{P} \left( \frac{N+1+\ell}{T} \right) - \mathbf{P} \left( \frac{k}{T} \right) \right) \Bigg\|_{1}. 
\end{align*}

\noindent It is clear that $\mathbf{\nu}_{N} \left( \mathbf{P} \left( \frac{k}{T} \right) \right)^{\ell}$  is a probability vector for $0 \leq \ell \leq k-N-2$, so naturally 
\begin{align*} 
\Bigg\| \mathbf{\pi}(0) \mathbf{P} \left( \frac{1}{T} \right) &\mathbf{P} \left( \frac{2}{T} \right) \cdots \mathbf{P} \left( \frac{k-1}{T} \right) \mathbf{P} \left( \frac{k}{T} \right) - \mathbf{\pi}\left( \frac{k}{T} \right) \Bigg\|_{TV} \\
&\leq  \max_{\nu} \Bigg\| \mathbf{\nu} \left( \mathbf{P}\left( \frac{k}{T} \right)  \right)^{k-N} - \mathbf{\pi} \left( \frac{k}{T} \right) \Bigg\|_{TV} \\
&+ \frac{1}{2} \sum_{\ell=0}^{k-N-2} \max_{\nu} \Bigg\| \mathbf{\nu} \left( \mathbf{P} \left( \frac{N+1+\ell}{T} \right) - \mathbf{P} \left( \frac{k}{T} \right) \right) \Bigg\|_{1} 
\end{align*}

\noindent where the maximum is taken over all probability vectors $\mathbf{\nu}$. \\

\noindent Because $k-N=t_{mix}(\mathbf{P}_{\infty}, \epsilon \slash 2) \geq t_{mix}(\mathbf{P}\left( \frac{k}{T} \right), \epsilon \slash 2)$,  it is easy to see that
\begin{align*} 
\Bigg\| \mathbf{\pi}(0) \mathbf{P} \left( \frac{1}{T} \right) &\mathbf{P} \left( \frac{2}{T} \right) \cdots \mathbf{P} \left( \frac{k-1}{T} \right) \mathbf{P} \left( \frac{k}{T} \right) - \mathbf{\pi}\left( \frac{k}{T} \right) \Bigg\|_{TV} \\
&\leq  \frac{\epsilon}{2} + \frac{1}{2} \sum_{\ell=0}^{k-N-2} \max_{\nu} \Bigg\| \mathbf{\nu} \left( \mathbf{P} \left( \frac{N+1+\ell}{T} \right) - \mathbf{P} \left( \frac{k}{T} \right) \right) \Bigg\|_{1}.
\end{align*}

\noindent Observe that the terms in the sum of the right hand side of the inequality are now the matrix norms for the matrices $\mathbf{P} \left( \frac{N+1+\ell}{T} \right) - \mathbf{P} \left( \frac{k}{T} \right)$  for $0 \leq \ell \leq k-N-2$.  This would imply that

\begin{align*} 
\Bigg\| \mathbf{\pi}(0) \mathbf{P} \left( \frac{1}{T} \right) &\mathbf{P} \left( \frac{2}{T} \right) \cdots \mathbf{P} \left( \frac{k-1}{T} \right) \mathbf{P} \left( \frac{k}{T} \right) - \mathbf{\pi}\left( \frac{k}{T} \right) \Bigg\|_{TV} \\
&\leq \frac{\epsilon}{2} + \frac{1}{2} \sum_{\ell=0}^{k-N-2} \Bigg\| \mathbf{P} \left( \frac{N+1+\ell}{T} \right) - \mathbf{P} \left( \frac{k}{T} \right) \Bigg\|. 
\end{align*}

\noindent By adding and subtracting the same value to the above inequality and then using the triangle inequality 
\begin{align*} 
\Bigg\| \mathbf{\pi}(0) \mathbf{P} \left( \frac{1}{T} \right) &\mathbf{P} \left( \frac{2}{T} \right) \cdots \mathbf{P} \left( \frac{k-1}{T} \right) \mathbf{P} \left( \frac{k}{T} \right) - \mathbf{\pi}\left( \frac{k}{T} \right) \Bigg\|_{TV} \\
&\leq  \frac{\epsilon}{2} + \frac{1}{2} \sum_{\ell=0}^{k-N-2} \Bigg\| \mathbf{P^{*}} \left( \frac{N+1+\ell}{T} \right) - \mathbf{P^{*}} \left( \frac{k}{T} \right) \Bigg\| \\
 &+ \frac{1}{2} \sum_{\ell=0}^{k-N-2} \Bigg\| \mathbf{P} \left( \frac{N+1+\ell}{T} \right) - \mathbf{P^{*}} \left( \frac{N+1+\ell}{T} \right) \Bigg\| \\
 &+ \frac{1}{2} \sum_{\ell=0}^{k-N-2} \Bigg\| \mathbf{P^{*}} \left( \frac{k}{T} \right) - \mathbf{P} \left( \frac{k}{T} \right) \Bigg\|.
\end{align*}

\noindent Using the density of the Lipschitz continuous functions with finite Lipschitz constant in the continuous function space
\begin{align*} 
\Bigg\| \mathbf{\pi}(0) \mathbf{P} \left( \frac{1}{T} \right) &\mathbf{P} \left( \frac{2}{T} \right) \cdots \mathbf{P} \left( \frac{k-1}{T} \right) \mathbf{P} \left( \frac{k}{T} \right) - \mathbf{\pi}\left( \frac{k}{T} \right) \Bigg\|_{TV} \\
&\leq  \frac{\epsilon}{2} + \frac{1}{2} \sum_{\ell=0}^{k-N-2} \Bigg\| \mathbf{P^{*}} \left( \frac{N+1+\ell}{T} \right) - \mathbf{P^{*}} \left( \frac{k}{T} \right) \Bigg\| \\
 &+ \sum_{\ell=0}^{k-N-2} \frac{\epsilon}{4 t_{mix} \left(\mathbf{P}_{\infty}, \epsilon \slash 2 \right)}.
\end{align*}

\noindent Because $\mathbf{P^{*}}: [0,1] \rightarrow \mathcal{P}_{n}^{ia}$  is a Lipschitz continuous function with Lipschitz constant $L$,  it can be shown that 
\begin{align*}
\Bigg\| \mathbf{\pi}(0) \mathbf{P} \left( \frac{1}{T} \right) &\mathbf{P} \left( \frac{2}{T} \right) \cdots \mathbf{P} \left( \frac{k-1}{T} \right) \mathbf{P} \left( \frac{k}{T} \right) - \mathbf{\pi}\left( \frac{k}{T} \right) \Bigg\|_{TV} \\
&\leq  \frac{\epsilon}{2} + \frac{L}{2} \sum_{\ell=0}^{k-N-2} \Bigg| \frac{N+1+\ell}{T} -  \frac{k}{T}  \Bigg| \\
&+ \sum_{\ell=0}^{k - N - 2} \frac{\epsilon}{4 t_{mix} \left(\mathbf{P}_{\infty}, \epsilon \slash 2 \right)}. 
\end{align*}

\noindent After relabeling the sum
\begin{align*}
\Bigg\| \mathbf{\pi}(0) \mathbf{P} \left( \frac{1}{T} \right) &\mathbf{P} \left( \frac{2}{T} \right) \cdots \mathbf{P} \left( \frac{k-1}{T} \right) \mathbf{P} \left( \frac{k}{T} \right) - \mathbf{\pi}\left( \frac{k}{T} \right) \Bigg\|_{TV} \\
&\leq \frac{\epsilon}{2} + \frac{L}{4T} (k - N - 1) (k - N) \\
&+  \frac{\epsilon}{4 t_{mix} \left(\mathbf{P}_{\infty}, \epsilon \slash 2 \right)} (k-N-1).
\end{align*}

\noindent Because $k-N = t_{mix}(\mathbf{P}_{\infty}, \epsilon \slash 2)$  
\begin{align*}
\Bigg\| \mathbf{\pi}(0) \mathbf{P} \left( \frac{1}{T} \right) &\mathbf{P} \left( \frac{2}{T} \right) \cdots \mathbf{P} \left( \frac{k-1}{T} \right) \mathbf{P} \left( \frac{k}{T} \right) - \mathbf{\pi}\left( \frac{k}{T} \right) \Bigg\|_{TV} \\
&\leq \frac{3 \epsilon}{4} + \frac{L}{4T} t_{mix}^{2} \left(\mathbf{P}_{\infty}, \epsilon \slash 2 \right).
\end{align*} 

\noindent $T$  was selected to be large enough.  In fact, $$T = \frac{3 n^{3 \slash 2} L t_{mix}^{2} \left(\mathbf{P}_{\infty}, \epsilon \slash 2 \right)}{ (1 - 2 \sqrt{n} \epsilon) \epsilon} \geq \frac{L t_{mix}^{2}(\mathbf{P}_{\infty}, \epsilon \slash 2)}{\epsilon}.$$

\noindent Finally it is shown that $$\Bigg\| \mathbf{\pi}(0) \mathbf{P} \left( \frac{1}{T} \right) \mathbf{P} \left( \frac{2}{T} \right) \cdots \mathbf{P} \left( \frac{k-1}{T} \right) \mathbf{P} \left( \frac{k}{T} \right) - \mathbf{\pi}\left( \frac{k}{T} \right) \Bigg\|_{TV} \leq \epsilon. $$
\ 

\noindent \underline{Part 2. Assume that $k < t_{mix}\left(\mathbf{P}_{\infty}, \epsilon \slash 2 \right)$} \\ 

\noindent First notice that 
\begin{align*} 
\mathbf{\pi}(0) \mathbf{P} \left( \frac{1}{T} \right) \mathbf{P} \left( \frac{2}{T} \right) &\cdots \mathbf{P} \left( \frac{k-1}{T} \right) \mathbf{P} \left( \frac{k}{T} \right) - \mathbf{\pi}\left( \frac{k}{T} \right) \\
&= \left( \mathbf{\pi}(0) - \mathbf{\pi}\left( \frac{1}{T} \right) \right) \mathbf{P}^{\circ}_{1} + \mathbf{\pi}\left( \frac{1}{T} \right) \mathbf{P}^{\circ}_{2}  - \mathbf{\pi}\left( \frac{k}{T} \right).
\end{align*}

\noindent Repeating this process, it can be shown that
\begin{align*} 
\mathbf{\pi}(0) \mathbf{P} \left( \frac{1}{T} \right) \mathbf{P} \left( \frac{2}{T} \right) &\cdots \mathbf{P} \left( \frac{k-1}{T} \right) \mathbf{P} \left( \frac{k}{T} \right) - \mathbf{\pi}\left( \frac{k}{T} \right) \\
&= \sum_{j=1}^{k} \left( \mathbf{\pi}\left( \frac{j-1}{T} \right)  - \mathbf{\pi}\left( \frac{j}{T} \right) \right) \mathbf{P}^{\circ}_{j}.
\end{align*}

\noindent By the triangle inequality
\begin{align*} 
\Bigg\| \mathbf{\pi}(0) \mathbf{P} \left( \frac{1}{T} \right) \mathbf{P} \left( \frac{2}{T} \right) &\cdots \mathbf{P} \left( \frac{k-1}{T} \right) \mathbf{P} \left( \frac{k}{T} \right) - \mathbf{\pi}\left( \frac{k}{T} \right) \Bigg\|_{TV} \\
&\leq \sum_{j=1}^{k} \Bigg\| \left( \mathbf{\pi}\left( \frac{j-1}{T} \right)  - \mathbf{\pi}\left( \frac{j}{T} \right) \right) \mathbf{P}^{\circ}_{j} \Bigg\|_{TV}.
\end{align*}

\noindent Because $\mathbf{P}^{\circ}_{j}$  is a probability transition matrix  $$\Bigg\| \left( \mathbf{\pi}\left(\frac{j-1}{T} \right)  - \mathbf{\pi}\left( \frac{j}{T} \right) \right) \mathbf{P}^{\circ}_{j} \Bigg\|_{TV} \leq \Bigg\| \mathbf{\pi} \left(\frac{j-1}{T} \right)  - \mathbf{\pi}\left( \frac{j}{T} \right)  \Bigg\|_{TV}.$$  

\noindent This will imply that
\begin{align*} 
\Bigg\| \mathbf{\pi}(0) \mathbf{P} \left( \frac{1}{T} \right) \mathbf{P} \left( \frac{2}{T} \right) &\cdots \mathbf{P} \left( \frac{k-1}{T} \right) \mathbf{P} \left( \frac{k}{T} \right) - \mathbf{\pi}\left( \frac{k}{T} \right) \Bigg\|_{TV} \\
&\leq \sum_{j=1}^{k} \Bigg\| \mathbf{\pi}\left(\frac{j-1}{T}\right)  - \mathbf{\pi}\left( \frac{j}{T} \right) \Bigg\|_{TV}.
\end{align*}

\noindent Using Proposition \ref{pr: one} it is clear that as long as $$ T \geq \frac{3Ln^{3 \slash 2} t_{mix}(\mathbf{P}_{\infty}, \epsilon \slash 2)}{\epsilon \sigma} $$  one has that $$ \Bigg\| \mathbf{\pi} \left( \frac{j-1}{T} \right) - \mathbf{\pi} \left( \frac{j}{T} \right) \Bigg\|_{TV} \leq \frac{\epsilon}{t_{mix}(\mathbf{P}_{\infty}, \epsilon \slash 2)}.$$

\noindent This would imply that 
\begin{align*} 
\Bigg\| \mathbf{\pi}(0) \mathbf{P} \left( \frac{1}{T} \right) \mathbf{P} \left( \frac{2}{T} \right) &\cdots \mathbf{P} \left( \frac{k-1}{T} \right) \mathbf{P} \left( \frac{k}{T} \right) - \mathbf{\pi}\left( \frac{k}{T} \right) \Bigg\|_{TV} \\
&\leq \frac{k \epsilon}{t_{mix}(\mathbf{P}_{\infty}, \epsilon \slash 2)} \\
& \leq \epsilon.
\end{align*}

\noindent Proposition \ref{pr: two} implies that $$T = \frac{3Ln^{3 \slash 2} t_{mix}^{2}(\mathbf{P}_{\infty}, \epsilon \slash 2)}{(1 - 2\sqrt{n} \epsilon)\epsilon} \geq \frac{3Ln^{3 \slash 2} t_{mix}(\mathbf{P}_{\infty}, \epsilon \slash 2)}{\epsilon \sigma}.$$

\noindent This completes our proof.
\end{proof}

\section{\sc {Conclusion}}
\label{sec: conclusion}
Notice that an immediate consequence of Theorem \ref{thm: two} is that there is a tighter asymptotic bound when compared to the previous result in Theorem \ref{thm: one}.  Also convex-combination evolutions are a specific type of continuous evolution, so the class of evolutions is much broader.  The following corollary sums up these two points.

\begin{Cor} \label{cor: one}
Given a time-inhomogeneous, discrete-time Markov chain governed by a continuous evolution in $\mathcal{P}_{n}^{ia}$, for $\epsilon > 0$  and $\mathbf{P}:[0,1] \rightarrow \mathcal{P}_{n}^{ia}$  a continuous function with respect to the matrix norm we have that 

\begin{equation} \label{eq: eleven}
t_{sad}(\mathbf{P}, \epsilon) = O \left( \frac{t_{mix}^{2}(\mathbf{P}_{\infty}, \epsilon \slash 2)}{\epsilon} \right).
\end{equation}
\end{Cor}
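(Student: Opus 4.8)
The plan is to read Corollary \ref{cor: one} as an asymptotic restatement of Theorem \ref{thm: two}, since the substantive estimate is already in hand; what remains is to pass from the tolerance $\epsilon$ used in Theorem \ref{thm: two} to the $\epsilon/2$ demanded here, and to absorb the harmless constants into the $O(\cdot)$. First I would record that $t_{mix}(\mathbf{P}_{\infty},\cdot)$ is non-increasing in its tolerance argument: shrinking the tolerance only strengthens the defining inequality of Definition \ref{def: one}, so $t_{mix}(\mathbf{P}_{\infty},\epsilon) \leq t_{mix}(\mathbf{P}_{\infty},\epsilon/2)$. Combined with Theorem \ref{thm: two}, this gives, for $0 < \epsilon < \tfrac{1}{2\sqrt n}$,
$$ t_{sad}(\mathbf{P},\epsilon) \;\leq\; \frac{3 n^{3/2} L\, t_{mix}^{2}(\mathbf{P}_{\infty},\epsilon/2)}{(1-2\sqrt n\,\epsilon)\,\epsilon}, $$
with $L$ the Lipschitz constant supplied in the proof of Theorem \ref{thm: two} and $n$ fixed with the chain.

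The only feature of this bound not already of the advertised shape $O\!\left(t_{mix}^{2}(\mathbf{P}_{\infty},\epsilon/2)/\epsilon\right)$ is the prefactor $(1-2\sqrt n\,\epsilon)^{-1}$, which blows up as $\epsilon \uparrow \tfrac{1}{2\sqrt n}$. Since the $O(\cdot)$ is meant with $n$, $L$, $\mathbf{P}$ fixed and $\epsilon \to 0^{+}$ — and necessarily so, because the right-hand side tends to $0$ as $\epsilon \to \infty$ whereas $t_{sad}\geq 1$ always — I would simply restrict to $0 < \epsilon \leq \tfrac{1}{4\sqrt n}$, where $1-2\sqrt n\,\epsilon \geq \tfrac12$ and therefore
$$ t_{sad}(\mathbf{P},\epsilon) \;\leq\; 6\, n^{3/2} L\,\frac{t_{mix}^{2}(\mathbf{P}_{\infty},\epsilon/2)}{\epsilon}; $$
as $6 n^{3/2} L$ is a constant, this is precisely the claim.

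I do not anticipate a genuine obstacle here: the corollary is essentially Theorem \ref{thm: two} plus monotonicity of $t_{mix}$ in the tolerance. The one point deserving a moment's care is that, exactly as in Theorem \ref{thm: two}, the implied constant depends on $\mathbf{P}$ through the dimension $n$ and a Lipschitz constant $L$ (which is fixed if $\mathbf{P}$ is itself Lipschitz, and otherwise is the Lipschitz constant of the approximating function extracted in that proof). Finally, it is worth remarking that the exponent pair $\bigl(t_{mix}^{2},\epsilon^{-1}\bigr)$ obtained here improves on the $\bigl(t_{mix}^{4},\epsilon^{-3}\bigr)$ of Theorem \ref{thm: one} whenever the largest mixing time is large and $\epsilon$ is small, which is the sharpening announced at the start of Section \ref{sec: conclusion}.
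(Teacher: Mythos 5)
Your proposal is correct and follows essentially the route the paper intends: Corollary~\ref{cor: one} is read off from Theorem~\ref{thm: two} (whose proof in fact already carries $\epsilon/2$ inside the mixing time, so your monotonicity step $t_{mix}(\mathbf{P}_{\infty},\epsilon)\leq t_{mix}(\mathbf{P}_{\infty},\epsilon/2)$ is a harmless safeguard against the $\epsilon$-versus-$\epsilon/2$ discrepancy between the theorem's statement and its proof), and the prefactor $(1-2\sqrt{n}\,\epsilon)^{-1}$ is absorbed into the implied constant by restricting to $\epsilon\leq\tfrac{1}{4\sqrt{n}}$, which is the right reading of the $O(\cdot)$ as $\epsilon\to 0^{+}$. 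Your remarks about the dependence on $n$ and the Lipschitz constant $L$ of the approximating function are exactly the hidden parameters in the paper's constant, so nothing is missing.
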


A final question that one might have is whether this bound is optimal and the answer is that it is optimal.  To show this it suffices to find one specific function $\mathbf{P}$  such that the stable adiabatic time is exactly a constant  multiplied my the square of the largest mixing time divided by $\epsilon$.  For this one can consider a convex-combination evolution.  Here let $$\mathbf{P}_{0} = \left( \begin{array}{cccc} 1 & 0 & \cdots & 0 \\ 1 & 0 & \cdots & 0 \\ \vdots & \vdots & \ddots & \vdots \\ 1 & 0 & \cdots & 0 \end{array} \right) \qquad \text{ and } \qquad \mathbf{P}_{1} = \left( \begin{array}{cccccc} 0 & 1 & 0 & 0 & \cdots & 0 \\ 0 & 0 & 1 & 0 & \cdots & 0 \\ 0 & 0 & 0 & 1 & \ddots & \vdots \\ \vdots & \vdots & \vdots & \ddots & \ddots & 0 \\ 0 & 0 & 0 & \cdots & 0 & 1 \\ 0 & 0 & 0 & \cdots & 0 & 1 \end{array} \right).$$
\ 

As shown in \cite{bradford2011adiabatic} the adiabatic time for this convex-combination evolution is of asymptotic order of the square of the largest mixing time divided by $\epsilon$.  The only inequality that must hold for the adiabatic time, rather than the stable adiabatic time, is for $\| \mathbf{\pi_{0}} \mathbf{P_{\frac{1}{T}}} \cdots \mathbf{P_{\frac{T}{T}}} - \mathbf{\pi_{\frac{T}{T}}} \|_{TV} < \epsilon$.  Naturally, for all the other inequalities to hold $\| \mathbf{\pi_{0}} \mathbf{P_{\frac{1}{T}}} \cdots \mathbf{P_{\frac{k}{T}}} - \mathbf{\pi_{\frac{k}{T}}} \|_{TV} < \epsilon$  where $1 \leq k < T$  one must select a value of $T$  at least as large as a constant multiplied by the square of the largest mixing time divided by $\epsilon$.  The result in this paper, however, guarantees that this value of $T$  must be of the same asymptotic order.

This shows that the result from Corollary \ref{cor: one} is optimal.  

\section{\sc {Proofs}}
\label{sec: proofs}
\subsection{Proof of Proposition \ref{pr: one}}
\ 

\noindent To begin, consider the creation of an orthonormal basis of eigenvectors associated with $(\mathbb{I} - \mathbf{P}(s))(\mathbb{I} - \mathbf{P}(s))^{T}$  with respect to $\| \cdot \|_{2}$  through a singular value decomposition of $(\mathbb{I} - \mathbf{P}(s))$, where $s \in [0,1]$. \\

\noindent Here let $\sigma_{1}(s) \geq \cdots \geq \sigma_{n-1}(s) = \sigma(s)$  be the positive singular values of $( \mathbb{I} - \mathbf{P}(s))$  with respect to the Euclidean inner product.  This implies that there exists an orthonormal basis $\{ \mathbf{v_{1}}(s), \cdots, \mathbf{v_{n}}(s) \}$  such that $\mathbf{v_{j}}(s)(\mathbb{I} - \mathbf{P}(s))(\mathbb{I} - \mathbf{P}(s))^{T} = \sigma_{j}^{2}(s) \mathbf{v_{j}}(s)$  for $1 \leq j \leq n-1$  and $\mathbf{v_{n}}(s)(\mathbb{I} - \mathbf{P}(s))(\mathbb{I} - \mathbf{P}(s))^{T} = \mathbf{0}$. \\

\noindent Here $\mathbf{v_{n}}(s) = \mathbf{\pi}(s) \slash \| \mathbf{\pi}(s) \|_{2}$.  \\

\noindent To show continuity at  $s $ let $\epsilon > 0$  and first notice that for any $t \in [0,1]$, $(\mathbf{\pi}(t) - \mathbf{\pi}(s))( \mathbb{I} - \mathbf{P}(s) )= \mathbf{\pi}(t)( \mathbf{P}(t) - \mathbf{P}(s))$. \\

\noindent Using the Euclidean norm, it can easily be seen that if $\mathbf{P}(t) \neq \mathbf{P}(s)$  and $t \neq s$, then $$\frac{\| (\mathbf{\pi}(t) - \mathbf{\pi}(s))( \mathbb{I} - \mathbf{P}(s) ) \|_{2}}{\| \mathbf{\pi}(t) - \mathbf{\pi}(s) \|_{2}}=   \frac{\| \mathbf{\pi}(t)( \mathbf{P}(t) - \mathbf{P}(s)) \|_{2}}{ \| \mathbf{\pi}(t) - \mathbf{\pi}(s) \|_{2}}.$$

\noindent Throughout this proof $< \cdot , \cdot >$  we denote the Euclidean inner product. \\

\noindent  For $1 \leq j \leq n$  let $c_{j}(s,t) = <\mathbf{\pi}(t) - \mathbf{\pi}(s), \mathbf{v_{j}}(s)>$.  Then $\mathbf{\pi}(t) - \mathbf{\pi}(s) = \sum_{j=1}^{n} c_{j}(s,t) \mathbf{v_{j}}(s)$.\\

\noindent This will imply that
\begin{align*}
&\frac{\| (\mathbf{\pi}(t) - \mathbf{\pi}(s))(\mathbb{I} - \mathbf{P}(s)) \|_{2}^{2}}{\| \mathbf{\pi}(t) - \mathbf{\pi}(s) \|_{2}^{2}} \\
&\qquad = \frac{<(\mathbf{\pi}(t) - \mathbf{\pi}(s))(\mathbb{I} - \mathbf{P}(s)),(\mathbf{\pi}(t) -\mathbf{\pi}(s))(\mathbb{I} - \mathbf{P}(s))>}{<\mathbf{\pi}(t) - \mathbf{\pi}(s),\mathbf{\pi}(t) - \mathbf{\pi}(s)>} \\
&\qquad = \frac{<\mathbf{\pi}(t) - \mathbf{\pi}(s), (\mathbf{\pi}(t) - \mathbf{\pi}(s))(\mathbb{I} - \mathbf{P}(s))(\mathbb{I} - \mathbf{P}(s))^{T}>}{<\mathbf{\pi}(t) - \mathbf{\pi}(s),\mathbf{\pi}(t) - \mathbf{\pi}(s)>} \\
&\qquad = \frac{<\sum_{j=1}^{n} c_{j}(s,t) \mathbf{v_{j}}(s), \sum_{j=1}^{n-1} \sigma_{j}^{2}(s) c_{j}(s,t) \mathbf{v_{j}}(s)>}{<\sum_{j=1}^{n} c_{j}(s,t) \mathbf{v_{j}}(s),\sum_{j=1}^{n} c_{j}(s,t) \mathbf{v_{j}}(s)>} \\
&\qquad = \frac{\sum_{j=1}^{n-1} \sigma_{j}^{2}(s) c_{j}^{2}(s,t)}{\sum_{j=1}^{n} c_{j}^{2}(s,t)} \\
&\qquad \geq \sigma_{n-1}^{2}(s) \frac{\sum_{j=1}^{n-1} c_{j}^{2}(s,t)}{\sum_{j=1}^{n} c_{j}^{2}(s,t)} \\
&\qquad = \sigma_{n-1}^{2}(s) \left( 1 -\frac{ c_{n}^{2}(s,t)}{\sum_{j=1}^{n} c_{j}^{2}(s,t)} \right) \\
&\qquad = \sigma_{n-1}^{2}(s) \left( 1 - \left( \frac{<\mathbf{\pi}(t) - \mathbf{\pi}(s), \mathbf{v_{n}}(s)>}{\| \mathbf{\pi}(t) - \mathbf{\pi}(s) \|_{2}} \right)^{2} \right).
\end{align*}

\noindent Letting $\mathbf{w}(s,t) = (\mathbf{\pi}(t) - \mathbf{\pi}(s)) \slash \| \mathbf{\pi}(t) - \mathbf{\pi}(s) \|_{2}$, it can be shown that $$ \sigma_{n-1}^{2}(s) \left( 1 - \left( <\mathbf{w}(s,t), \mathbf{v_{n}}(s)> \right)^{2} \right) \leq  \frac{\| \mathbf{\pi}(t)(\mathbf{P}(t) - \mathbf{P}(s)) \|_{2}^{2}}{\| \mathbf{\pi}(t) - \mathbf{\pi}(s) \|_{2}^{2}}.$$

\noindent Because $\mathbf{w}(s,t)$  and $\mathbf{v_{n}}(s)$  are unit vectors, the fact that $$\| \mathbf{w}(s,t) \|_{2}^{2} - 2<\mathbf{w}(s,t),\mathbf{v_{n}}(s)> + \| \mathbf{v_{n}}(s) \|_{2}^{2} = \| \mathbf{w}(s,t) - \mathbf{v_{n}}(s) \|_{2}^{2}$$  

\noindent can be used to show that $$1 - <\mathbf{w}(s,t), \mathbf{v_{n}}(s)> = \frac{1}{2} \| \mathbf{w}(s,t) - \mathbf{v_{n}}(s) \|_{2}^{2}$$  

\noindent and the fact that $$\| \mathbf{w}(s,t) \|_{2}^{2} + 2<\mathbf{w}(s,t),\mathbf{v_{n}}(s)> + \| \mathbf{v_{n}}(s) \|_{2}^{2} = \| \mathbf{w}(s,t) + \mathbf{v_{n}}(s) \|_{2}^{2}$$ 

\noindent can be used to show that $$1 + <\mathbf{w}(s,t), \mathbf{v_{n}}(s)> = \frac{1}{2} \| \mathbf{w}(s,t) + \mathbf{v_{n}}(s) \|_{2}^{2}.$$

\noindent From this it is clear that $1 - \left( <\mathbf{w}(s,t), \mathbf{v_{n}}(s)> \right)^{2} = \| \mathbf{w}(s,t) - \mathbf{v_{n}}(s) \|_{2}^{2} \cdot \| \mathbf{w}(s,t) + \mathbf{v_{n}}(s) \|_{2}^{2} \slash 4.$  Plugging this into the previous equation $$ \frac{\sigma_{n-1}^{2}(s)}{4} \| \mathbf{w}(s,t) - \mathbf{v_{n}}(s) \|_{2}^{2} \cdot \| \mathbf{w}(s,t) + \mathbf{v_{n}}(s) \|_{2}^{2} \leq \frac{\| \mathbf{\pi}(t)(\mathbf{P}(t) - \mathbf{P}(s)) \|_{2}^{2}}{\| \mathbf{\pi}(t) - \mathbf{\pi}(s) \|_{2}^{2}}.$$

\noindent After performing some basic algebra $$ \| \mathbf{\pi}(t) - \mathbf{\pi}(s) \|_{2} \leq \frac{2 \| \mathbf{\pi}(t) (\mathbf{P}(t) - \mathbf{P}(s)) \|_{2}}{\sigma_{n-1}(s) \| \mathbf{w}(s,t) - \mathbf{v_{n}}(s) \|_{2} \cdot \| \mathbf{w}(s,t) + \mathbf{v_{n}}(s) \|_{2}} .$$

\noindent Notice that $<\mathbf{w}(s,t), \mathbf{1}> \slash \sqrt{n} = 0$  and $<\mathbf{v_{n}}(s), \mathbf{1}> \slash \sqrt{n} = 1 \slash \left( \sqrt{n} \| \mathbf{\pi}(s) \|_{2} \right)$  for all $t \in [0,1]$.  Because these are the scalar components of the projections of $\mathbf{w}(s,t)$  and $\mathbf{v_{n}}(s)$  onto $\mathbf{1}$  respectively,it can be shown that the minimum possible value for $\| \mathbf{w}(s,t) - \mathbf{v_{n}}(s) \|_{2}$  and $\| \mathbf{w}(s,t) + \mathbf{v_{n}}(s) \|_{2}$  is at least $1 \slash \left( \sqrt{n} \| \mathbf{\pi}(s) \|_{2} \right).$ \\

\noindent This shows that 
\begin{align*}
\| \mathbf{\pi}(t) - \mathbf{\pi}(s) \|_{2} &\leq \frac{2 n \| \mathbf{\pi}(s) \|_{2}^{2} \cdot  \| \mathbf{\pi}(t) (\mathbf{P}(t) - \mathbf{P}(s)) \|_{2}}{\sigma_{n-1}(s)} \\
& \leq \frac{2 n \| \mathbf{\pi}(t) (\mathbf{P}(t) - \mathbf{P}(s)) \|_{2}}{\sigma_{n-1}(s)} \\
& = \frac{2 n \| \mathbf{\pi}(t) (\mathbf{P}(t) - \mathbf{P}(s)) \|_{2}}{\sigma(s)}.
\end{align*}

\noindent Let $\sigma = \min_{s \in [0,1]} \{ \sigma(s) \}$. \\

\noindent Again for $\mathbf{x},\mathbf{y} \in \mathbb{R}^{n}$ such that $\mathbf{x}$  and $\mathbf{y}$  are probability measures, it is understood that $$ \frac{1}{2} \| \mathbf{x} -\mathbf{y} \|_{2} \leq \| \mathbf{x} - \mathbf{y} \|_{TV} \leq \frac{\sqrt{n}}{2} \| \mathbf{x} - \mathbf{y} \|_{2}.$$

\noindent This will imply that 
\begin{align*}
\| \mathbf{\pi}(t) - \mathbf{\pi}(s) \|_{TV} &\leq \frac{n^{3 \slash 2} \| \mathbf{\pi}(t) ( \mathbf{P}(t) - \mathbf{P}(s) )\|_{1}}{\sigma} \\
&\leq \frac{n^{3 \slash 2} \max_{\nu} \| \mathbf{\nu} ( \mathbf{P}(t) - \mathbf{P}(s) )\|_{1}}{\sigma}
\end{align*}

\noindent where the maximum is taken over all vectors $\mathbf{\nu}$  such that $\| \mathbf{\nu} \|_{1} = 1$. \\

\noindent  Using the matrix norm notation one can conclude that $$ \| \mathbf{\pi}(t) - \mathbf{\pi}(s) \|_{TV} \leq \frac{n^{3 \slash 2} \| \mathbf{P}(t) - \mathbf{P}(s) \|}{\sigma}. $$

\noindent Notice that the space of Lipschitz continuous functions mapping $[0,1]$  to $\mathcal{P}_{n}^{ia}$  are dense in the space of continuous functions mapping $[0,1]$  to $\mathcal{P}_{n}^{ia}$.  This implies that there exists a Lipschitz function $\mathbf{P}^{*}$  with Lipschitz constant $L$  such that $$ \| \mathbf{P}(t) - \mathbf{P}^{*}(t) \| \leq \frac{\sigma \epsilon}{3 n^{3 \slash 2}}$$

\noindent for all $t \in [0,1]$. \\

\noindent One can use the triangle inequality along with this density argument to conclude that
\begin{align*}
\| \mathbf{\pi}(t) - \mathbf{\pi}(s) \|_{TV} &\leq \frac{n^{3 \slash 2} \| \mathbf{P}(t) - \mathbf{P}(s) \|}{\sigma} \\
&= \frac{n^{3 \slash 2}}{\sigma} \left( \| \mathbf{P}^{*}(t) - \mathbf{P}^{*}(s) +  \mathbf{P}(t) - \mathbf{P}^{*}(t) + \mathbf{P}^{*}(s) - \mathbf{P}(s) \| \right) \\
&\leq \frac{n^{3 \slash 2}}{\sigma} \left( \| \mathbf{P}^{*}(t) - \mathbf{P}^{*}(s) \| + \| \mathbf{P}(t) - \mathbf{P}^{*}(t) \| + \| \mathbf{P}^{*}(s) - \mathbf{P}(s) \| \right) \\
&\leq \frac{n^{3 \slash 2}}{\sigma} \left( \| \mathbf{P}^{*}(t) - \mathbf{P}^{*}(s) \| + \frac{\sigma \epsilon}{3 n^{3 \slash 2}} + \frac{\sigma \epsilon}{3 n^{3 \slash 2}} \right) \\
&= \frac{n^{3 \slash 2} \| \mathbf{P}^{*}(t) - \mathbf{P}^{*}(s) \|}{\sigma} + \frac{2 \epsilon}{3}
\end{align*}

\noindent Because $\mathbf{P}^{*}$  is Lipschitz continuous with Lipschitz constant $L$, one has that $\| \mathbf{P}^{*}(t) - \mathbf{P}^{*}(s) \| \leq L \big| t - s \big|$  for all $t, s \in [0,1]$. \\

\noindent This shows that $$\| \mathbf{\pi}(t) - \mathbf{\pi}(s) \|_{TV} \leq \frac{L n^{3 \slash 2} \big| t - s \big|}{\sigma} + \frac{2 \epsilon}{3}.$$

\noindent Clearly if $\epsilon > 0$, then having $$\big| t - s \big| \leq \delta = \frac{ \epsilon \sigma}{3 L n^{3 \slash 2}}$$

\noindent implies $\| \mathbf{\pi}(t) - \mathbf{\pi}(s) \|_{TV} \leq \epsilon$. \\

\noindent This shows that $\mathbf{\pi}$  is continuous at $s \in [0,1]$.  Because one can do this for any $s \in [0,1]$, it is seen that $\mathbf{\pi}$  is continuous with respect to the total variation norm on $[0,1]$.  Because $\delta$  does not depend on the value of $s \in [0,1]$,  it is shown that $\mathbf{\pi}$  is uniformly continuous.

\end{document}